\documentclass[12pt,twoside,reqno]{amsart}
\allowdisplaybreaks
\usepackage{bm}

\usepackage{amssymb,amsmath,mathrsfs,amstext,amsthm,amsfonts,amscd,xcolor, tikz-cd}

\usepackage{bbm}
\usepackage{dsfont}

\usepackage{mathrsfs}

\usepackage{amsthm, enumerate}

\usepackage[ansinew]{inputenc}
\usepackage{graphicx}
\usepackage{hyperref}

\newcommand{\R}{\mathbb{R}}

\newcommand{\N}{\mathbb{N}}

\newcommand{\norm}[1]{\lVert#1\rVert} 
\newcommand{\normtwo}[1]{
	{\left\vert\kern-0.25ex\left\vert\kern-0.25ex\left\vert #1
		\right\vert\kern-0.25ex\right\vert\kern-0.25ex\right\vert} }

\newcommand{\Pp}{\mathbb{P}}

\newcommand{\B}{\mathcal{B}}

\newcommand{\E}{\mathscr{E}}

\newcommand{\F}{\mathcal{F}}

\newcommand\restr[2]{{
		\left.\kern-\nulldelimiterspace 
		#1 
		\vphantom{\big|} 
		\right|_{#2} 
}}

\theoremstyle{plain}
\newtheorem{theorem}{Theorem}[section]
\newtheorem{proposition}{Proposition}[section]
\newtheorem{corollary}[proposition]{Corollary}
\newtheorem{lemma}[proposition]{Lemma}

\theoremstyle{definition}
\newtheorem{definition}{Definition}[section]

\theoremstyle{definition}
\newtheorem{remark}{Remark}[section]

\numberwithin{equation}{section}

\newcommand{\Prob}{\mathrm{Prob}}

\newcommand{\EE}{\mathbb{E}}

\newcommand{\Qop}{\mathcal{Q}}

\title[Large Deviations for Unbounded Observables]{Large Deviations for Unbounded Observables in Dynamical Systems}

\date{}

\begin{document}

\author[A. Pontes]{Anselmo Pontes}
\address{Departamento de Matem\'atica, Pontif\'icia Universidade Cat\'olica do Rio de Janeiro (PUC-Rio), Brazil}
\email{aspjunior2011@mat.puc-rio.br}

\begin{abstract}
In this paper we establish a large deviations type estimate for strongly mixing Markov chains with respect to the $L_p$ norm. As applications we derive such estimates for the iterates of a locally constant random cocycle with mixed rank, as well as for unbounded observables of expanding maps. 
\end{abstract}

\maketitle


\section{Introduction}\label{intro}

A central theme in ergodic theory and probability is the study of statistical properties of dynamical systems and Markov processes. Strong mixing often ensures stochastic behavior of time averages of observables, reflected in classical limit theorems such as the law of large numbers, the central limit theorem (CLT), and large deviations principles (LDP). These results not only clarify the ergodic behavior of deterministic and random systems but also play a fundamental role in applications.

For bounded observables belonging to Banach spaces with spectral gap properties (e.g.\ H\"older or bounded variation functions), large deviations estimates are by now well understood. If $\varphi$ is such an observable for a system with exponential mixing, one typically obtains exponential bounds of the form
\[
\mu \left\{ x : \Big| \frac{1}{n}\sum_{k=0}^{n-1}\varphi(T^k x) - \int \varphi \, d\mu \Big| > \epsilon \right\}
    \leq C e^{-c(\epsilon) n}, \qquad n \geq 1.
\]
This setting has been extensively studied; see for instance \cite{Go69,Go78,MN08,CDK-paper3}.

A natural question is whether such results extend to \emph{unbounded} observables, which frequently arise in concrete problems. Important examples include logarithmic observables near singularities, such as $\varphi(x) = -\log d(x,z)$, which appear in the study of recurrence rates, entropy production, Lyapunov exponents, and continued fraction expansions. In these cases, the lack of boundedness prevents direct application of spectral methods, and obtaining sharp deviation estimates requires additional techniques. Existing works in this direction either focus on particular systems or yield suboptimal bounds, typically with stretched-exponential decay rates such as $\exp(-n^{1/5})$; see for example \cite{AFLV},\cite{NT2020}.

\medskip
\noindent
\textbf{Contributions.}
The main goal of this paper is to develop a systematic framework for large deviation estimates for unbounded observables in the setting of stochastic dynamical systems with strong mixing. Our main contributions are:
\begin{itemize}
    \item We prove a general large deviation estimate (Theorem~\ref{thm:unbounded-LDT}) for Markov systems $(M,K,\mu,\mathcal{B})$, extending the exponential deviation bounds known for bounded observables to a broad class of unbounded observables satisfying suitable truncation and tail conditions.
    \item As an application, we establish new deviation bounds for expanding interval maps and logarithmic observables of the form $\varphi(x) = |\log d(x,z)|$ (Theorem~\ref{thm:LDT expanding}). We employ maximal inequalities for stationary sequences to improve previously known results by showing that the deviation probability decays at least as fast as $\exp(-c \, n^{1/3})$.
    \item We provide concrete examples, including the tent map, to illustrate how the abstract results translate into quantitative bounds in explicit dynamical systems.
\end{itemize}

\medskip
\noindent
\textbf{Structure of the paper.}
In Section 2  we recall the necessary definitions and mixing framework for Markov systems. Section 3 contains the proof of Theorem~\ref{thm:unbounded-LDT}. In Section 4 we apply these results to expanding maps of the interval, proving Theorem~\ref{thm:LDT expanding} and its corollaries. 
  

In this section we recall the main definitions we will use.

   \section{Preliminaries}
  
  In this section we recall the basic definitions and framework.
  
  Let $M$ be a compact metric space with Borel $\sigma$-algebra $\F$, and denote by $\Prob(M)$ the space of Borel probability measures on $M$, endowed with the weak* topology.
  
  \begin{definition}
  	A \emph{stochastic dynamical system} (SDS) on $M$ is a continuous Markov kernel 
  	\[
  	K : M \to \Prob(M), \qquad x \mapsto K_x,
  	\]
  	assigning to each $x\in M$ a probability measure $K_x$ on $M$.
  \end{definition}
  
  \begin{definition}
  	A measure $\mu \in \Prob(M)$ is called \emph{$K$--stationary} if
  	\[
  	\mu(A) = \int_M K_x(A) \, d\mu(x), \qquad \forall A \in \F.
  	\]
  \end{definition}
  
  \begin{remark}
  	The set of $K$--stationary measures is a nonempty compact convex subset of $\Prob(M)$.
  \end{remark}
  
  \begin{definition}
  	The \emph{Markov operator} associated with $K$ is the linear operator
  	\[
  	\Qop = \Qop_K : L^\infty(M) \to L^\infty(M), 
  	\qquad 
  	\Qop \varphi(x) = \int_M \varphi(y) \, dK_x(y).
  	\]
  \end{definition}
  
  We call a triple $(M,K,\mu)$ a \emph{Markov system}.  
  On the product space $X^+ = M^{\mathbb{N}}$ let
  \[
  Z_n(x) := x_n, \qquad x = \{x_n\}_{n\in\N}\in X^+,
  \]
  be the canonical coordinate maps. By Kolmogorov's extension theorem, for each $\pi\in \Prob(M)$ there exists a unique measure $\mathbb{P}_\pi$ on $X^+$ such that $\{Z_n\}_{n\geq 0}$ is a Markov chain with transition kernel $K$ and initial law $\pi$.  
  If $\pi$ is $K$--stationary then $\mathbb{P}_\pi$ is invariant under the left shift
  \[
  \sigma : X^+ \to X^+,\qquad \sigma(\{x_n\}_{n\in\N}) = \{x_{n+1}\}_{n\in\N}.
  \]
  
  Given an observable $\varphi : M \to \R$ and $n\in\N$, the \emph{$n$-th stochastic Birkhoff sum} is
  \[
  S_n \varphi(x) = \sum_{k=0}^{n-1} \varphi(Z_k(x)) = \varphi(x_0) + \cdots + \varphi(x_{n-1}), \qquad x\in X^+.
  \]
  
  The main property that we want to investigate is the rate of convergence in probability of the stochastic Birkhoff sums to the integral of the observable. More precisely, we want to establish finitary concentration upper bounds in the same fashion as in the Hoeffding's inequality.
  
  \begin{definition}
  	We say that a Markov system $(M,K,\mu,\B)$ satisfies a \emph{large deviation theorem (LDT)} if for every $\epsilon>0$ and $\varphi \in \B$ there exist $n_0(\epsilon)\in\N$ and $c(\epsilon)>0$ such that
  	\[
  	\mathbb{P}_\mu \left\{ x \in X^+ : 
  	\Big|\tfrac{1}{n} S_n\varphi(x) - \int \varphi \, d\mu \Big| > \epsilon \right\} 
  	\leq e^{-c(\epsilon) n}, \qquad n\geq n_0(\epsilon).
  	\]
  \end{definition}

  To obtain such type of result it is necessary to restrict the space of observables $\varphi$ to a Banach space $\B \subseteq L^\infty(M)$ with suitable mixing properties. In \cite{Glynn2002}, the authors prove an LDT for uniformly ergodic Markov chains which is known to be the best type of mixing. There are many relevant examples which are not uniformly ergodic, for instance, the process generated by the iterates of an uniformly hyperbolic map. For these maps, there are many results (see \cite{MN08},\cite{Alves}) which derive LDT from the spectral gap of the transfer operator in some adequate Banach space. Nevertheless, there are systems for which the Markov operator does not have the spectral gap property such as the random torus tranlastions and linear cocycle over it considered in \cite{CDK-paper1},\cite{CDK-paper3}. This motivated the authors of \cite{CDK-paper3} to introduce the following weaker notion of mixing.

  \begin{definition}
  	A Markov system $(M,K,\mu,\B)$ is said to be \emph{strongly mixing with rate $\{r_n\}$} if for all $\varphi \in \B$,
  	\[
  	\big\| \Qop^n \varphi - \int_M \varphi \, d\mu \big\|_{\infty} \leq r_n \, \|\varphi\|_{\B}, 
  	\qquad n\geq 1.
  	\]
  \end{definition}

  In \cite{CDK-paper3}, the authors prove LDT for strongly mixing Markov systems with rate $r_{n} \searrow 0$.  
  The main goal of this work is to investigate what can be obtained for unbounded observables. In order to do this, we need to apply the results already known in the bounded case to a truncation of the observable. The main novelty of this work is the use of Burkholder's inequality for the remainder (unbounded) part of the observable.

\section{Martingale Approach}\label{martingale}
 In this section we recover the LDT result of \cite{CDK-paper3} under mild extra assumptions, summability of the rate of convergence. We start by recalling the definition of Martingale difference.

\begin{definition}
    Given a filtration of $\sigma-$algebras  $\F_1 \subset \F_2 \subset \dots \subset \F_n \subset \F$ on a probability space $(\Omega, \F, \Pp)$, a sequence of random variables $\{X_i\}_{i=1}^{n}$ is said to be adapted to this filtration if $X_i$ is $\F_i -$ measurable for all $i = 1, \dots , n$.

\end{definition}

\begin{definition}
    Given a filtration of $\sigma-$algebras  $\F_1 \subset \F_2 \subset \dots \subset \F_n \subset \F$ on a probability space $(\Omega, \F, \Pp)$, a sequence of adapted random variables $\{X_i\}_{i=1}^{n}$ is said to be a martingale difference if it satisfies the following conditions:

    \begin{enumerate}
        \item $\EE [X_1] = 0$,
        \item $\EE[X_i| \F_{i-1}] = 0$.
    \end{enumerate}
\end{definition}

\begin{theorem}[Azuma-Hoeffding]\label{Azuma-Hoeffding}
Let $\{X_i\}_{i=1}^{n}$ be a sequence of martingale differences. If there is $C>0$ such that $\|X_i\|_{\infty} \leq C$, then for all $\varepsilon >0$ we have
\[
\Pp \left(\frac{1}{n}\sum_{i=1}^{n} X_i \geq \varepsilon\right) \leq e^{-\frac{n \varepsilon^2}{2 C^2}} \, .
\]
\end{theorem}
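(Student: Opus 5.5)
We prove the Azuma--Hoeffding inequality with the standard exponential moment (Chernoff) method, using the bound $\|X_i\|_\infty\le C$ together with the martingale difference property. For $t>0$ and $S_n=\sum_{i=1}^n X_i$, Markov's inequality applied to $e^{tS_n}$ gives
\[
\Pp\left(\tfrac1n S_n \geq \varepsilon\right) \leq e^{-tn\varepsilon}\,\EE\left[e^{tS_n}\right].
\]
The task is then to show $\EE[e^{tS_n}] \le e^{nt^2C^2/2}$ and to optimize over $t$.

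\textbf{Step 1 (conditional Hoeffding lemma).} Let $Y$ be a random variable with $|Y|\le C$ almost surely and $\EE[Y\mid\mathcal G]=0$. We claim
\[
\EE[e^{tY}\mid \mathcal G]\le \cosh(tC)\le e^{t^2C^2/2}.
\]
\begin{itemize}
\item Since $y\mapsto e^{ty}$ is convex on $[-C,C]$, it lies below its chord: $e^{ty}\le \frac{C-y}{2C}e^{-tC}+\frac{C+y}{2C}e^{tC}$.
\item Take conditional expectations. The terms linear in $Y$ vanish because $\EE[Y\mid\mathcal G]=0$, which leaves $\cosh(tC)$.
\item Compare Taylor series term by term: $\frac{(tC)^{2k}}{(2k)!}\le \frac{(tC)^{2k}}{2^k k!}$, so $\cosh(tC)\le e^{t^2C^2/2}$.
\end{itemize}
For $i=1$ we use the unconditional version, with $\mathcal G$ trivial and the hypothesis $\EE[X_1]=0$.

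\textbf{Step 2 (iterated conditioning).} Since $S_{n-1}$ is $\F_{n-1}$-measurable, the tower property gives
\[
\EE[e^{tS_n}]=\EE\big[e^{tS_{n-1}}\,\EE[e^{tX_n}\mid\F_{n-1}]\big]\le e^{t^2C^2/2}\,\EE[e^{tS_{n-1}}].
\]
Inducting down to $i=1$ yields $\EE[e^{tS_n}]\le e^{nt^2C^2/2}$.

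\textbf{Step 3 (optimization).} Combining the bounds,
\[
\Pp\left(\tfrac1n S_n\ge\varepsilon\right)\le \exp\left(-tn\varepsilon+\tfrac{nt^2C^2}{2}\right).
\]
The exponent is minimized at $t=\varepsilon/C^2$, which gives exactly $e^{-n\varepsilon^2/(2C^2)}$.

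The only step requiring care is Step 1, the conditional Hoeffding lemma. The chord (convexity) argument keeps it elementary, and no measurability subtleties arise beyond adaptedness of the sequence $\{X_i\}$, which is part of the hypotheses.
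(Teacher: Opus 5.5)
Your proof is correct: the Chernoff bound, the conditional Hoeffding lemma via the chord inequality and $\cosh x\le e^{x^2/2}$, the tower-property induction (using adaptedness, and $\EE[X_1]=0$ for the base step), and the choice $t=\varepsilon/C^2$ together give exactly $e^{-n\varepsilon^2/(2C^2)}$. The paper gives no proof of this theorem and simply quotes Azuma--Hoeffding as a classical tool, so your argument is the standard one behind that citation; applying it to $\{-X_i\}$ also yields the two-sided bound with the factor $2$ that the paper actually uses in Theorem~\ref{bounded-ldt}.
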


Now we precisely define what the martingale sequence is for which we apply the Azuma-Hoeffding inequality. Recall that we are considering a realization $(Z_1, \dots, Z_n)$ of our Markov chain. Let us start by defining the appropriate filtration. For every $i = 1, \dots , n$ let
\begin{equation*}
    \F_{i} = \F(Z_1, \dots, Z_i).
\end{equation*}

\noindent
Then we clearly have $\F_1 \subset \F_2 \subset \dots \subset \F_n$. 

Fix $\varphi \in \B$ such that $\int \varphi \, d\mu = 0$ and define
\begin{equation*}
    \psi := \sum_{i=0}^{\infty} \Qop^{i} \varphi \quad \text{note that} \quad \varphi = \psi - \Qop\psi .
\end{equation*}

\noindent
Finally, for all $i = 1, 2 ,\dots , n $ consider 
\begin{equation*}
    X_i = \psi(Z_{i+1}) - \Qop \psi (Z_i) \, .
\end{equation*}

\noindent
It is easy to see that 
\begin{align*}
    S_n \varphi &= \sum_{i =1}^{n} \varphi(Z_i) =  \sum_{i =1}^{n-1} X_i + \psi(Z_1) - \Qop\psi(Z_{n}) \, .
\end{align*}  

\noindent Now we show that $\{X_i\}_{i=1}^{n}$ is a martingale difference.

\begin{lemma} $\{X_i\}_{i=0}^{n}$ is a finite martingale difference with respect to the filtration $\{\F_i\}_{i=0}^{n}$.
\end{lemma}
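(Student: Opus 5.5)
We check the three defining properties directly for $X_i = \psi(Z_{i+1}) - \Qop\psi(Z_i)$, using the Markov property of $\mathbb{P}_\mu$ and the stationarity of $\mu$. Note an indexing point first. As written, $X_i$ depends on $Z_{i+1}$, so it is $\F_{i+1}$-measurable but not $\F_i$-measurable. We therefore work with the shifted filtration $\mathcal{G}_i := \F_{i+1} = \F(Z_1,\dots,Z_{i+1})$, or equivalently index the differences as $X_i$ adapted to $\F_{i+1}$. This is harmless for the later application of Theorem~\ref{Azuma-Hoeffding}, since that theorem only needs some filtration for which the sequence is a martingale difference.

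\textbf{Step 0 (well-definedness and integrability).} We need $\psi=\sum_{i\ge0}\Qop^i\varphi$ to converge in $L^\infty$. Since $\int\varphi\,d\mu=0$, strong mixing gives $\|\Qop^i\varphi\|_\infty\le r_i\|\varphi\|_\B$. Under the summability assumption $\sum r_n<\infty$ announced at the start of this section, $\psi\in L^\infty$ with $\|\psi\|_\infty\le (\|\varphi\|_\infty+\sum r_i\|\varphi\|_\B)$. Since $\Qop$ is a contraction on $L^\infty$, we also get $\|\Qop\psi\|_\infty\le\|\psi\|_\infty$. Consequently each $X_i$ is bounded, $|X_i|\le 2\|\psi\|_\infty$, and in particular integrable. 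The telescoping identity $\varphi=\psi-\Qop\psi$ follows from continuity of $\Qop$ on $L^\infty$.

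\textbf{Step 1 (adaptedness).} $X_i$ is a measurable function of $(Z_i,Z_{i+1})$, hence $\mathcal{G}_i$-measurable.

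\textbf{Step 2 (conditional mean zero).} By the Markov property, the conditional law of $Z_{i+1}$ given $\F(Z_1,\dots,Z_i)$ is $K_{Z_i}$. Hence
\[
\EE[\psi(Z_{i+1})\mid \F_i]=\int\psi\,dK_{Z_i}=\Qop\psi(Z_i),
\]
and therefore $\EE[X_i\mid\mathcal{G}_{i-1}]=\EE[X_i\mid\F_i]=0$. The identity for bounded measurable $\psi$ is justified by the defining property of the Kolmogorov measure on cylinder sets together with a monotone class argument. Taking expectations gives $\EE[X_1]=0$ as well. Alternatively, this also follows from stationarity, since $\int\psi\,d\mu=\int\Qop\psi\,d\mu$ when $Z_1,Z_2\sim\mu$.

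The main obstacle is only bookkeeping. One must get the index shift in the filtration right, and one must make sure $\psi$ is a genuine bounded measurable function rather than merely a formal series. The latter is exactly where summability of $r_n$ is needed, and it is the one point I would state explicitly as a hypothesis.
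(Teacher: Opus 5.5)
Your proof is correct and is essentially the paper's argument: the Markov property gives $\EE[\psi(Z_{i+1})\mid \F_i]=\Qop\psi(Z_i)$, so each difference has zero conditional mean. Shifting the filtration explicitly and checking that $\psi$ is bounded are useful additions, since the paper's one-line proof silently reindexes $X_i$ as $\psi(Z_i)-\Qop\psi(Z_{i-1})$ and assumes that $\sum_i \Qop^i\varphi$ converges.
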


\begin{proof}
	By the Markov property, $\E[X_i \mid \F_{i-1}] = \E[\psi(Z_i)\mid Z_{i-1}] - Q\psi(Z_{i-1}) = 0$, hence $\{X_i\}$ is a martingale difference.
\end{proof}

\medskip

The following result corresponds to the case where $p= \infty$ and the decay rate $r_n$ is summable. The proof here is a extension of the transfer operator case considered in \cite{Alves} to general Markov systems.

\begin{theorem}\label{bounded-ldt}
    Let $\varphi \in L^{\infty} (\mu)$ and suppose that 
    \begin{equation*}
        \psi := \sum_{n=0}^{\infty} \Qop^n \varphi \in L^{\infty} (\mu) \, .
    \end{equation*}
    Then for every $\epsilon > 0$ there exists $N(\epsilon,\varphi) = \frac{4}{\epsilon \|\psi\|_{\infty}}$ such that for $n \geq N$
\begin{equation*}
        \mu\left\{x \in M : \left| \frac{1}{n}S_n\varphi-\int_M \varphi d\mu\right| > \varepsilon \right\} \leq 2 e^{-c(\epsilon) \, n},
    \end{equation*}
    where $c(\epsilon) = \frac{\epsilon^2}{8 (||\varphi||_{\infty} + 2 (||\sum \Qop^n \varphi||_{\infty})^2}$.
\end{theorem}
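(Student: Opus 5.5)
We reduce to the case $\int_M \varphi\,d\mu = 0$ by replacing $\varphi$ with $\varphi - \int\varphi\,d\mu$. This leaves the deviation event unchanged and does not affect the summability hypothesis, since $\Qop$ fixes constants and the hypothesis is understood for the centered observable. The approach is the martingale (Gordin) decomposition already set up above. With $\psi = \sum_{i\ge 0}\Qop^i\varphi \in L^\infty(\mu)$ we have $\varphi = \psi - \Qop\psi$. Put $X_i = \psi(Z_{i+1}) - \Qop\psi(Z_i)$. Then
\[
S_n\varphi = \sum_{i=1}^{n-1} X_i + \psi(Z_1) - \Qop\psi(Z_n),
\]
and by the preceding Lemma $\{X_i\}$ is a martingale difference sequence for the filtration $\F_i$. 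The probability is taken under $\mathbb{P}_\mu$, which is the natural reading of the $\mu$ in the statement.

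The first step is a uniform bound on the increments. Since $\Qop$ is a contraction in $L^\infty$, $\|\Qop\psi\|_\infty \le \|\psi\|_\infty$. Writing $\Qop\psi = \psi - \varphi$ gives the alternative bound
\[
\|X_i\|_\infty \le \|\psi(Z_{i+1}) - \psi(Z_i)\|_\infty + \|\varphi\|_\infty \le \|\varphi\|_\infty + 2\|\psi\|_\infty =: C,
\]
which is the constant appearing in $c(\epsilon)$. Stationarity of $\mu$ is what allows these $\mu$-essential sup bounds to transfer to almost sure bounds along the chain: each $Z_i$ has law $\mu$, so null sets are not charged. The boundary term satisfies
\[
|\psi(Z_1) - \Qop\psi(Z_n)| \le 2\|\psi\|_\infty \quad \text{almost surely}.
\]

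The second step splits the deviation. If $|\frac1n S_n\varphi| > \epsilon$ and $n$ is large enough that $\frac{2\|\psi\|_\infty}{n} \le \frac{\epsilon}{2}$, then $\bigl|\frac1n\sum_{i=1}^{n-1}X_i\bigr| > \frac{\epsilon}{2}$. The threshold requires $n \ge 4\|\psi\|_\infty/\epsilon$. I expect this to be the intended $N(\epsilon,\varphi)$; the stated formula appears to have $\|\psi\|_\infty$ misplaced in the denominator, and I would adjust it accordingly.

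The third step applies the Azuma--Hoeffding inequality (Theorem~\ref{Azuma-Hoeffding}) to $X_i$ and to $-X_i$ with threshold $\epsilon/2$ and the bound $C$. This gives
\[
2\exp\!\left(-\frac{(n-1)\epsilon^2}{8C^2}\right),
\]
which has the claimed form with $c(\epsilon) = \epsilon^2/(8C^2)$. To get exactly $n$ in the exponent rather than $n-1$, normalize by $n-1$ and absorb the ratio $n/(n-1)$ into the threshold, or simply accept the harmless adjustment; alternatively, index the sum so that there are $n$ martingale terms plus a bounded remainder.

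The main obstacle is bookkeeping rather than substance. The exact constants, the index shift between $n$ and $n-1$ terms, and the form of $N(\epsilon)$ must be reconciled with the statement. Conceptually, everything follows from boundedness of $\psi$ together with Azuma--Hoeffding.
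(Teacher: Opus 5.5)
Your proposal is correct and is essentially the paper's proof: the same decomposition $\varphi=\psi-\Qop\psi$ with martingale increments $X_i$, the same increment bound $\|\varphi\|_\infty+2\|\psi\|_\infty$, absorption of the boundary term once $2\|\psi\|_\infty/n\le\epsilon/2$, and Azuma--Hoeffding at level $\epsilon/2$. Your threshold $N=4\|\psi\|_\infty/\epsilon$ is what the paper's argument actually uses (its displayed condition on $N$ has the same misprint), the $n-1$ versus $n$ issue resolves as you say since $n^2/(n-1)\ge n$, and the stated $N$ is in fact harmless: for $n<4\|\psi\|_\infty/\epsilon$ one has $c(\epsilon)n<\epsilon/(8\|\psi\|_\infty)<1/4$ whenever the event is nonempty (which forces $\epsilon<\|\varphi\|_\infty\le 2\|\psi\|_\infty$), so $2e^{-c(\epsilon)n}>1$ there and the bound holds trivially.
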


    \begin{proof}
        Without loss of generality suppose that $\int \varphi \, d\mu = 0$.By the definition of $X_i$ it is clear that for $i = 1,2, \dots , n$ we have,

        \begin{equation}
            ||X_i||_{\infty} \leq ||\varphi||_{\infty} + 2 ||\psi||_{\infty}
        \end{equation} 

\noindent
    By the previous lemma we know that  $\{X_i\}_{i=0}^{n}$ is a martingale difference. Then Azuma-Hoeffding inequality is applicable and we get.

    \begin{equation*}
       \mu\left(\frac{1}{n} \left| \sum X_i \right| > \frac{\varepsilon}{2}\right) \leq 2 \exp\left\{ -\frac{\varepsilon^2 n}{8(||\varphi||_{\infty} + 2 ||\psi||_{\infty})^2} \right\}
    \end{equation*}

\noindent
    for all $n \in \N$. In particular for $n \geq N$ where $\frac{2}{N||\psi||_{\infty}}  \leq \frac{\varepsilon}{2}$ we obtain,
    \begin{align*}
    \mu\left(\frac{1}{n} \left| S_n \right| > \varepsilon\right) &\leq \mu\left(\frac{1}{n} \left| \sum X_i \right| + \frac{2}{N}||\psi||_{\infty}> \varepsilon\right) \\
    &\leq \mu\left(\frac{1}{n} \left| \sum X_i \right| > \frac{\varepsilon}{2}\right) \leq 2 \exp\left\{ -\frac{\varepsilon^2 n}{8(||\varphi||_{\infty} + 2 ||\psi||_{\infty})^2} \right\},
    \end{align*}
which establishes the result.
\end{proof}

In this case, we are going to need the following generalization of Burkholder's Inequality for strictly stationary sequences due to Peligrag et al \cite{Maximal_Lp}.

\begin{theorem}
    Let $X_k$ is a strictly stationary sequence such that $\EE[|X_1|^p] < \infty$, $p \geq 2$. Then
    \begin{equation*}
        \norm{S_n}_p \leq C_p \, n^{\frac{1}{2}} \, [\norm{X_1}_p + 240 \sum_{k=1}^{n} k^{-1/2} \, \norm{\EE[X_k | \F_0]}_p].
    \end{equation*}
\end{theorem}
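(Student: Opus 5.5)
The natural route is the dyadic martingale-approximation argument of Peligrad--Utev--Wu: prove a recursive bound along $n=2^r$ using Burkholder's inequality for martingales, then pass to general $n$ by binary expansion. I would assume, as is implicit in the statement, that the sequence is adapted to a stationary filtration $\F_k$ (e.g.\ $\F_k=\sigma(X_j, j\le k)$) with $\EE[X_k\mid\F_{k-1}]$ distributed like $\EE[X_1\mid\F_0]$ shifted. Write $\Delta_r:=\norm{S_{2^r}}_p$ and $a_r:=\norm{\EE[S_{2^r}\mid\F_0]}_p$; by stationarity, $a_r$ also controls $\norm{\EE[S_{2^r}\circ\sigma^{m}\mid \F_{m}]}_p$ for every shift $m$.

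\textbf{Step 1: martingale piece.} Set $D_k:=X_k-\EE[X_k\mid\F_{k-1}]$, a stationary martingale difference sequence with $\norm{D_k}_p\le 2\norm{X_1}_p$. Burkholder's inequality gives $\norm{\sum_{k\le n}D_k}_p\le C_p\norm{(\sum D_k^2)^{1/2}}_p$. Since $p\ge 2$, Minkowski's inequality in $L^{p/2}$ bounds this by $C_p n^{1/2}\norm{D_1}_p$. This step alone already gives the $n^{1/2}\norm{X_1}_p$ term.

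\textbf{Step 2: dyadic recursion.} Split $S_{2^{r+1}}=S_{2^r}+S_{2^r}\circ\sigma^{2^r}$. Apply Step 1 with block variables in place of single ones: the two blocks $Y_1=S_{2^r}$ and $Y_2=S_{2^r}\circ\sigma^{2^r}$, relative to the coarse filtration $\F_0\subset\F_{2^r}\subset\F_{2^{r+1}}$. Decompose each block as its martingale difference part plus its conditional expectation on the previous coarse $\sigma$-algebra. More robustly, iterate over all $2^{r-j}$ blocks of length $2^j$ and apply Burkholder at each scale. This yields an inequality of the form
\[
\Delta_{r}\le C_p\Big(2^{r/2}\norm{X_1}_p+\sum_{j=0}^{r-1}2^{(r-j)/2}\,a_j\Big),
\]
where the factor $2^{(r-j)/2}$ is Burkholder's $\sqrt{\#\text{blocks}}$ applied to the stationary block projections $\EE[\,\text{block of length }2^j\mid\text{past}]$. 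Factoring out $2^{r/2}$ gives $\Delta_r\le C_p2^{r/2}\big(\norm{X_1}_p+\sum_j 2^{-j/2}a_j\big)$.

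\textbf{Step 3: from dyadic to the stated sum, and general $n$.} Bound $a_j\le\sum_{k=1}^{2^j}\norm{\EE[X_k\mid\F_0]}_p$ by the triangle inequality. Then interchange the order of summation: $\sum_j 2^{-j/2}\sum_{k\le 2^j}b_k=\sum_k b_k\sum_{j:2^j\ge k}2^{-j/2}\le c\sum_k k^{-1/2}b_k$, with $c=(1-2^{-1/2})^{-1}\approx 3.4$. The numerical constant $240$ comes from tracking these geometric factors together with those of Step 4. For general $n=\sum_i 2^{r_i}$, write $S_n$ as a sum of shifted dyadic blocks of distinct sizes. Stationarity makes each block's norm equal to $\Delta_{r_i}$, and $\sum_i 2^{r_i/2}\le c\,n^{1/2}$, giving the claim.

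\textbf{Main obstacle.} The hard part is Step 2. The naive decomposition leaves the conditional-expectation remainders $\EE[X_k\mid\F_{k-1}]$, which form a new stationary sequence that is not a martingale. One must avoid an induction in which $\norm{S_n}_p$ reappears with a constant larger than $1$. I would first try the Peligrad--Utev trick of projecting onto the coarse filtration at each dyadic scale. That way, the only non-martingale terms are $\EE[S_{2^j}\circ\sigma^m\mid\F_m]$, whose norms are exactly $a_j$ by stationarity, and the recursion closes with a geometric factor $2^{-1/2}$ per scale.
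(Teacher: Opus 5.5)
The paper gives no proof of this statement; it quotes it from Peligrad--Utev--Wu \cite{Maximal_Lp}. Judged on its own, your outline has the right architecture, and Steps 1 and 3 are fine. The gap is Step 2, the only nontrivial step: it is asserted rather than derived, and the justification you give is wrong.

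Write $B^{(j)}_m=\sum_{k=(m-1)2^j+1}^{m2^j}X_k$ and $P^{(j)}_m=\EE[B^{(j)}_m\mid\F_{(m-1)2^j}]$. The block projections $P^{(j)}_m$ are measurable with respect to the past; they are not martingale differences. So no Burkholder factor $\sqrt{2^{r-j}}$ is available for them. Bounding their sum by the triangle inequality gives $2^{r-j}a_j$, which destroys the $n^{1/2}$ scaling. Your two-block version gives $\Delta_{r+1}\le\sqrt2\,C_p(\Delta_r+a_r)+2a_r$. That is exactly the induction with constant larger than $1$ that you warn against. Your last paragraph only says you would try a coarse-projection trick, without saying what it is.

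The missing idea is that the intermediate projections telescope. What survives at each scale are differences of projections of the \emph{same} block onto two nested pasts:
\[
P^{(j)}_{2m-1}+P^{(j)}_{2m}=P^{(j+1)}_m+d^{(j)}_m,\qquad d^{(j)}_m:=\EE[B^{(j)}_{2m}\mid\F_{(2m-1)2^j}]-\EE[B^{(j)}_{2m}\mid\F_{(2m-2)2^j}].
\]
Here $(d^{(j)}_m)_m$ is a genuine martingale difference sequence for the filtration $(\F_{(2m-1)2^j})_m$, with $\|d^{(j)}_m\|_p\le 2a_j$ by stationarity and contraction. Start from $X_k=D_k+P^{(0)}_k$ and pair repeatedly:
\[
S_{2^r}=\sum_{k=1}^{2^r}D_k+\sum_{j=0}^{r-1}\sum_{m=1}^{2^{r-j-1}}d^{(j)}_m+\EE[S_{2^r}\mid\F_0].
\]
One application of Burkholder per scale, with no induction on $\Delta$ and hence no constant blow-up, gives
\[
\Delta_r\le 2C_p\Bigl(2^{r/2}\|X_1\|_p+\sum_{j<r}2^{(r-j-1)/2}a_j\Bigr)+a_r .
\]
Your display omits the top term $a_r$, but it is harmless:
\[
a_r\le\sum_{k\le 2^r}\|\EE[X_k\mid\F_0]\|_p\le 2^{r/2}\sum_{k\le 2^r}k^{-1/2}\|\EE[X_k\mid\F_0]\|_p .
\]
With this in place, your Step 3 completes the proof. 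Two small points: there is no Step 4 in your proposal, and the constant $240$ is immaterial since $C_p$ is unspecified.
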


If $X_i$ is a martingale difference, then this result recovers the classical Burkholder inequality.

\begin{theorem}[Burkholder's Inequality]
    Let $\{X_i\}_{i=1}^{n}$ be a sequence of martingale differences adapted to a filtration $\{\mathcal{F}_i\}$. Then, for any $p \geq 2$, there exists a constant $C_p$ such that
    \begin{equation*}
        \norm{S_n}_p \leq C_p \, n^{\frac{1}{2}} \, \norm{X_1}_p.
    \end{equation*}
\end{theorem}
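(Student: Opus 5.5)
We derive the statement from the maximal-type inequality of Peligrad et al.\ stated just above. For a martingale difference sequence the correction terms vanish: for every $k\ge 1$ we have $\EE[X_k\mid\F_{k-1}]=0$, and by the tower property
\[
\EE[X_k\mid\F_0]=\EE\bigl[\EE[X_k\mid\F_{k-1}]\mid\F_0\bigr]=0 .
\]
Hence $\norm{\EE[X_k\mid\F_0]}_p=0$, the sum $240\sum_{k=1}^n k^{-1/2}\norm{\EE[X_k\mid\F_0]}_p$ disappears, and the bound reduces to $\norm{S_n}_p\le C_p n^{1/2}\norm{X_1}_p$. One small point of bookkeeping is that the martingale difference property must be indexed so that $X_k$ is centered given $\F_{k-1}\supseteq\F_0$. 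If the sequence starts at index $1$, we adjoin the trivial $\sigma$-algebra as $\F_0$, which is consistent with the condition $\EE[X_1]=0$.

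The difficulty is that the Peligrad inequality assumes strict stationarity, whereas a general martingale difference sequence need not be stationary. In the situation where the paper uses the result this is automatic: there $X_i=\psi(Z_{i+1})-\Qop\psi(Z_i)$ and $\mathbb{P}_\mu$ is shift invariant, so the sequence is strictly stationary and the argument above applies verbatim. In that case $\norm{X_i}_p=\norm{X_1}_p$ for all $i$, and the statement is simply the specialization of the previous theorem.

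For a genuinely general martingale difference sequence I would instead argue from the classical square-function form of Burkholder's inequality, $\norm{S_n}_p\le C_p\bigl\|(\sum_{i=1}^n X_i^2)^{1/2}\bigr\|_p$. Since $p\ge 2$, Minkowski's inequality in $L^{p/2}$ gives
\[
\Bigl\|\sum_{i=1}^n X_i^2\Bigr\|_{p/2}\le\sum_{i=1}^n\norm{X_i}_p^2 .
\]
Combining the two, $\norm{S_n}_p\le C_p\bigl(\sum_i\norm{X_i}_p^2\bigr)^{1/2}\le C_p n^{1/2}\max_i\norm{X_i}_p$. This gives the statement as written whenever $\norm{X_i}_p\le\norm{X_1}_p$, and in particular under stationarity or identical distribution.

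The main obstacle is therefore this hypothesis gap. The statement as written needs either the implicit stationarity assumption or a reading of $\norm{X_1}_p$ as $\sup_i\norm{X_i}_p$; without one of these it can fail. I would make the stationarity assumption explicit, since that is exactly the setting in which the result is used.
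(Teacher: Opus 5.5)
Your argument is correct and follows the paper's route: the paper justifies this statement only by remarking that for a martingale difference sequence the terms $\EE[X_k\mid\F_0]$ vanish, so the Peligrad et al.\ inequality reduces to $\norm{S_n}_p\le C_p n^{1/2}\norm{X_1}_p$. You are also right that this route requires strict stationarity, which the statement omits (with $X_1\equiv 0$ and $X_2$ a symmetric $\pm1$ variable the bound fails for $n=2$), and your square-function-plus-Minkowski argument, which gives the bound $C_p n^{1/2}\max_i\norm{X_i}_p$, correctly covers the general case.
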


\begin{remark}
    In general $C_p \leq p^p $, for $p \geq 4$ there are better bounds.
\end{remark}

In the Markov system case we consider $\F_i = \sigma(Z_0, \dots, Z_i)$ and note that $\EE[\varphi(Z_k) | \F_0] = \Qop^k \varphi(Z_0)$. Thus we obtain the following corollary which will be the form of Burkhholder's inequality we will actually apply.

\begin{corollary}
    Let $(M,K,\mu,\B)$ be a strongly mixing Markov system with rate $\{r_n\}$, consider a observable $\varphi \in \B$ with $\int \varphi \, d\mu = 0$. Then 
    \begin{center}
        $\norm{S_n\varphi}_p \leq C_p \, n^{\frac{1}{2}} \, [( \norm{\varphi}_p +240 \sum_{k = 1}^{n} \frac{\|\Qop^{k} \varphi\|_2}{\sqrt{k}})]  $.
    \end{center}
\end{corollary}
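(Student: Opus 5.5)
The plan is to apply the generalized Burkholder inequality of Peligrad et al.\ stated just above to the stationary sequence $X_k := \varphi(Z_k)$ under $\Pp_\mu$, and then to identify each term with a quantity on $M$.

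\textbf{Step 1: stationarity and integrability.} Since $\mu$ is $K$--stationary, $\Pp_\mu$ is $\sigma$-invariant. Hence $\{\varphi(Z_k)\}_{k\ge 0}$ is strictly stationary, and each $Z_k$ has law $\mu$. Consequently $\norm{X_1}_{L^p(\Pp_\mu)} = \norm{\varphi}_{L^p(\mu)}$, and this is finite because $\varphi\in\B\subseteq L^\infty$. The centering $\int\varphi\,d\mu=0$ gives $\EE[X_k]=0$.

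\textbf{Step 2: the projective terms.} With $\F_i=\sigma(Z_0,\dots,Z_i)$, the Markov property gives
\[
\EE[\varphi(Z_k)\mid \F_0]=\Qop^k\varphi(Z_0).
\]
Since $Z_0\sim\mu$, we get $\norm{\EE[X_k\mid\F_0]}_{L^q(\Pp_\mu)}=\norm{\Qop^k\varphi}_{L^q(\mu)}$ for every $q$. Substituting this into the inequality of Peligrad et al.\ yields the bound with $\norm{\Qop^k\varphi}_p$ in the sum. The shift of index (the theorem is written for $X_1,\dots,X_n$, while $S_n\varphi$ starts at $Z_0$) is harmless by stationarity.

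\textbf{Step 3: reducing to $L^2$ in the sum (main obstacle).} The statement asks for $\norm{\Qop^k\varphi}_2$ rather than $\norm{\Qop^k\varphi}_p$. For $p>2$ the former is smaller, so the displayed theorem alone does not give it. I would go back to the structure of the Peligrad--Utev--Wu argument, which works dyadically through the martingale/coboundary decomposition $\varphi=\psi_m-\Qop\psi_m+\Qop^m\varphi$ with $\psi_m=\sum_{i<m}\Qop^i\varphi$.

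First, the martingale part is handled by the classical Burkholder inequality. Its $L^p$ size is controlled by $\norm{\varphi}_p$ together with conditional-variance terms. Those conditional-variance terms are quadratic, so they are naturally measured in $L^2$-type norms of $\Qop^k\varphi$.

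Second, the coboundary/remainder part at dyadic scale $2^j$ contributes $\norm{\Qop^{2^j}\varphi}$. I would try to estimate it in $L^2$ and absorb the excess $L^p$ size using $\norm{\Qop^k\varphi}_p\le\norm{\varphi}_p$, which holds because $\Qop$ is an $L^p(\mu)$ contraction by stationarity. This absorbed excess is then part of the first term $\norm{\varphi}_p$.

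Finally, converting the dyadic sum back to $\sum_k k^{-1/2}\norm{\Qop^k\varphi}_2$ is routine. It uses the monotonicity $\norm{\Qop^{k+1}\varphi}_2\le\norm{\Qop^k\varphi}_2$, which again follows from $\Qop$ being an $L^2(\mu)$ contraction.

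If this refinement cannot be made to close for general $p$, the fallback is to establish the stated form for $p=2$ directly, where the two norms coincide. For $p>2$ one would then carry $\norm{\Qop^k\varphi}_p\le r_k\norm{\varphi}_\B$, using strong mixing, in the applications.
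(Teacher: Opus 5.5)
Your Steps 1--2 are exactly the paper's argument. The paper's only justification is the identity $\EE[\varphi(Z_k)\mid\F_0]=\Qop^k\varphi(Z_0)$ inserted into the Peligrad et al.\ inequality, and that yields the sum with $\norm{\Qop^k\varphi}_p$. You are right that for $p>2$ this does not give the displayed $\norm{\Qop^k\varphi}_2$; the paper does not address this.

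The genuine gap is Step 3, and it cannot be closed, because the $L^2$ form is false for $p>2$. Take a renewal chain: at each renewal draw a length $L$ with $\Pp(L=\ell)\asymp\ell^{-1-\beta}$ ($\beta>1$) and an independent fair sign $\epsilon$. The state $(L,j,\epsilon)$ then counts the remaining time $j$ down from $L-1$ to $0$, and we set $\varphi(L,j,\epsilon)=\epsilon L^{-\gamma}$ with $0<\gamma<\tfrac12$.

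\begin{itemize}
\item \emph{The right-hand side stays small.} We have $\int\varphi\,d\mu=0$ and $\Qop^k\varphi(L,j,\epsilon)=\epsilon L^{-\gamma}\mathbbm{1}_{\{j\ge k\}}$. Hence $\norm{\Qop^k\varphi}_\infty\le k^{-\gamma}$, which is a genuine strong-mixing rate with $\B=\mathrm{span}\{\varphi\}$. Also $\norm{\Qop^k\varphi}_2\asymp k^{(1-\beta-2\gamma)/2}$, so the right-hand side of the corollary is $O(\sqrt n)$ as soon as $\beta+2\gamma>2$.
\item \emph{The left-hand side is large.} Write $S_n\varphi=\epsilon_0A+B$, where $A=L_0^{-\gamma}\min(j_0+1,n)$ is the contribution of the initial excursion and $\epsilon_0$ is symmetric and independent of $(A,B)$. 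Convexity gives $\norm{S_n\varphi}_p\ge\norm{A}_p\gtrsim n^{1+(1-\beta)/p-\gamma}$, which is $\gg\sqrt n$ when $\beta<1+p(\tfrac12-\gamma)$.
\item \emph{Both conditions can hold.} For $p>2$ the window $2-2\gamma<\beta<1+p(\tfrac12-\gamma)$ is nonempty; its length is $(p-2)(\tfrac12-\gamma)$.
\end{itemize}

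This is exactly where your two mechanisms break.
\begin{itemize}
\item In the Burkholder--Rosenthal inequality for $p>2$, the conditional variance of the martingale increments $d_i$ enters as $\norm{\sum_i\EE[d_i^2\mid\F_{i-1}]}_{p/2}^{1/2}$. This is controlled by $L^p$ norms of the increments, not by $L^2$ norms of $\Qop^k\varphi$.
\item Absorbing the excess $\norm{\Qop^{2^j}\varphi}_p-\norm{\Qop^{2^j}\varphi}_2\le\norm{\varphi}_p$ fails as well. The scales $j\le\log_2 n$ carry weights $\asymp 2^{j/2}$, so the excess adds $\asymp\sqrt n\,\norm{\varphi}_p$ inside the bracket. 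The resulting bound is the trivial $O(n\norm{\varphi}_p)$.
\end{itemize}

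Your fallback is therefore the right conclusion. For $p=2$, Steps 1--2 prove the statement verbatim, and this is the only case used later: the estimate of $\norm{S_nR_M}_2$ in the proof of Theorem~\ref{thm:unbounded-LDT}. For $p>2$ the corollary must keep $\norm{\Qop^k\varphi}_p$. You can bound that by $r_k\norm{\varphi}_\B$, or by $\norm{\Qop^k\varphi}_\infty^{1-2/p}\norm{\Qop^k\varphi}_2^{2/p}$ if you want $L^2$ information to enter.
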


\subsection{Large Deviations for some Unbounded Observables}

Now we are ready to extend the previous results to handle unbounded observables with good decay of tails distribution.

\begin{theorem}[Large Deviation Estimate for Unbounded Observables]\label{thm:unbounded-LDT}
Let $(M, K, \mu, \mathcal{B})$ be a strongly mixing Markov system, and let $\varphi : M \to \mathbb{R}$ be an observable satisfying the following conditions:
\begin{enumerate}
    \item (Regularity after truncation) For every $M > 0$, the truncated observable $\varphi \cdot \mathbbm{1}_{\{\varphi \leq M\}}$ belongs to $\mathcal{B}$.
    \item (Exponential tails) There exist constants $C_1, \alpha > 0$ such that
    \[
    \mu(\varphi > t) \leq C_1 e^{-\alpha t}, \quad \forall t > 0.
    \]
    \item ($L^2$-control of tails) There exists a constant $C_2 > 0$ such that
    \[
    \sum_{k=1}^n k^{-1/2} \left\| Q^k\left( \varphi \cdot \mathbbm{1}_{\{\varphi > M\}} \right) \right\|_2 \leq C_2, \quad \forall n \in \mathbb{N}.
    \]
\end{enumerate}
Then, for every $\epsilon > 0$, there exist constants $c = c(\alpha) > 0$ and 
\[
n_0(\epsilon) := \left\lceil \frac{16 C \|\varphi^2\|_2}{\epsilon} \right\rceil \in \mathbb{N}
\]
such that, for all $n \geq n_0(\epsilon)$, the Birkhoff sums
\[
S_n \varphi := \sum_{k=0}^{n-1} \varphi(Z_k)
\]
satisfy the large deviation bound
\[
\mathbb{P} \left( \frac{1}{n} S_n \varphi > \epsilon \right) \leq 2 \exp\left( -c \epsilon^{2/3} n^{1/3} \right).
\]
\end{theorem}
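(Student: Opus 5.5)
We split $\varphi$ at a level $M=M(n)$, to be chosen later, into a bounded part and a tail:
\[
\varphi = \varphi_M + \varphi^M,\qquad \varphi_M := \varphi\cdot \mathbbm{1}_{\{\varphi\le M\}},\quad \varphi^M := \varphi\cdot\mathbbm{1}_{\{\varphi>M\}} .
\]
We may assume $\int\varphi\,d\mu=0$. Write $\bar\varphi^M := \varphi^M - \int \varphi^M d\mu$ and $\bar\varphi_M := \varphi_M - \int\varphi_M d\mu$. Then $S_n\varphi = S_n\bar\varphi_M + S_n\bar\varphi^M$, and the event $\{\frac1n S_n\varphi>\epsilon\}$ is contained in $\{\frac1n S_n\bar\varphi_M>\epsilon/2\}\cup\{\frac1n S_n\bar\varphi^M>\epsilon/2\}$. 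By the exponential tail hypothesis (2), $\int\varphi^M d\mu=\int_M^\infty \mu(\varphi>t)\,dt + M\mu(\varphi>M)\lesssim (1+M)e^{-\alpha M}$, so the centering constants are harmless.

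\emph{Bounded part.} By hypothesis (1), $\varphi_M\in\mathcal B$ and $|\varphi_M|\le M$ on the region where $\varphi$ is bounded below; we implicitly use $\varphi\ge -C$, or else truncate symmetrically. Strong mixing gives $\|\psi_M\|_\infty=\|\sum_k \Qop^k\bar\varphi_M\|_\infty \le \|\bar\varphi_M\|_\infty+\sum_k r_k\|\bar\varphi_M\|_{\mathcal B}$. Assuming, as is implicit in the statement, that $\|\varphi_M\|_{\mathcal B}\lesssim M$ and $\sum r_k<\infty$, Theorem \ref{bounded-ldt} applies and yields
\[
\Pp\Big(\tfrac1n S_n\bar\varphi_M>\tfrac\epsilon2\Big)\le 2\exp\Big(-\frac{c\,\epsilon^2 n}{M^2}\Big),
\]
valid once $n\gtrsim M/\epsilon$. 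This is the source of the threshold $n_0$.

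\emph{Tail part.} We apply Markov's inequality in $L^p$ together with the Corollary (Peligrad's form of Burkholder):
\[
\Pp\Big(\tfrac1n S_n\bar\varphi^M>\tfrac\epsilon2\Big)\le \Big(\frac{2}{\epsilon n}\Big)^p\|S_n\bar\varphi^M\|_p^p \le \Big(\frac{2C_p\, n^{1/2}(\|\varphi^M\|_p+240C_2)}{\epsilon n}\Big)^p,
\]
where hypothesis (3) controls the sum $\sum_k k^{-1/2}\|\Qop^k\varphi^M\|_2$; a centering correction of order $\|\varphi^M\|_1$ is absorbed. Since the sums here are only in $L^2$, the cleanest choice is $p=2$, giving a polynomial bound $\lesssim (\|\varphi^M\|_2^2+C_2^2)/(\epsilon^2 n)$, which is not good enough. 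To get stretched-exponential decay we must exploit the fact that $\|\varphi^M\|_p^p=\int_{\varphi>M}\varphi^p\,d\mu\lesssim e^{-\alpha M}\,\mathrm{poly}(M,p)$ is exponentially small. Alternatively, and more robustly, we bound the tail crudely by a union bound:
\[
\{S_n\varphi^M\ne0\}\subset\bigcup_{k<n}\{\varphi(Z_k)>M\},\qquad \text{so}\quad \Pp\le nC_1e^{-\alpha M},
\]
by stationarity. Off this event, $S_n\bar\varphi^M=-n\int\varphi^M\,d\mu$, which has size $\le n\epsilon/2$ for large $M$.

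\emph{Balancing.} The total bound is $2e^{-c\epsilon^2n/M^2}+nC_1e^{-\alpha M}$. Equating exponents with $M\approx (\epsilon^2 n)^{1/3}$ gives $\epsilon^2 n/M^2\approx \epsilon^{2/3}n^{1/3}$ and $\alpha M\approx\alpha\epsilon^{2/3}n^{1/3}$. The prefactor $n$ is absorbed by shrinking $c$ and enlarging $n_0$, which yields $2\exp(-c\,\epsilon^{2/3}n^{1/3})$ with $c$ depending on $\alpha$. The main obstacle I expect is the bounded-part step: guaranteeing that $\|\psi_M\|_\infty$ grows only linearly in $M$. This needs a $\mathcal B$-norm bound on the truncations that is linear in $M$, which hypothesis (1) does not literally provide. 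I would try first to extract it from the strong mixing constant, and otherwise add it as a standing assumption. In that step the Burkholder estimate, with hypothesis (3), serves only as a backup to control the centered tail in $L^2$ if the union bound is too lossy.
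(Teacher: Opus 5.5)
Your proof is correct, modulo the same tacit assumption the paper makes (see the end of this paragraph). It also shares the paper's architecture: truncate at $M$, apply Theorem~\ref{bounded-ldt} to the bounded part to get $\exp(-c\epsilon^2 n/M^2)$, show the tail is exponentially small in $M$, and balance with $M\asymp(\epsilon^2 n)^{1/3}$. The genuine difference is how you handle the tail $R_M=\varphi^M$. The paper asserts $\|S_nR_M\|_2^2\le 4nC\|R_M\|_2^2$ via Burkholder, then uses Cauchy--Schwarz, the tail hypothesis and Chebyshev to get $\frac{16C\|\varphi^2\|_2}{n\epsilon^2}e^{-\alpha M/2}$. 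You instead use stationarity and a union bound: off $\bigcup_{k<n}\{\varphi(Z_k)>M\}$ one has $S_n\bar\varphi^M=-n\int\varphi^M\,d\mu\le 0$, so the tail probability is at most $nC_1e^{-\alpha M}$. Your route is more elementary and uses only hypothesis (2), so hypothesis (3) and the Peligrad--Burkholder inequality become unnecessary. Your observation also pinpoints the weak step in the paper. The $p=2$ Peligrad bound with (3) as stated only gives $\|S_nR_M\|_2\le C\sqrt n\,(\|R_M\|_2+240C_2)$, and the $C_2$ term does not decay in $M$. The paper avoids this only by applying the martingale form of Burkholder to $R_M(Z_k)$, although these are neither centred nor martingale differences. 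What the paper's route would buy, if justified, is a prefactor $1/(n\epsilon^2)$ instead of your $n$, so no logarithmic absorption would be needed. Your $n_0$ must depend on $\alpha$ and $C_1$ as well. Neither argument actually delivers the displayed $n_0\propto 1/\epsilon$: the paper's prefactor $c_2/(n\epsilon^2)$ is dominated only once $n\gtrsim\epsilon^{-2}$. So this is not a defect specific to your proof. Finally, the obstacle you flag in the bounded part is real and shared with the paper. The bound $\exp(-n\epsilon^2/(8M^2))$ requires $\|\varphi_M\|_\infty+2\|\psi_M\|_\infty=O(M)$, i.e.\ summable $r_k$, $\|\varphi_M\|_{\mathcal B}=O(M)$, and a lower bound on $\varphi$. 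This cannot be extracted from strong mixing; it has to be a standing hypothesis, which the paper assumes silently when it writes that bound down.
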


\begin{proof}
Without loss of generality, assume $\int \varphi \, d\mu = 0$. For a truncation level $M > 0$, decompose $\varphi$ as
\[
\varphi = \varphi_M + R_M, \quad \text{where} \quad \varphi_M := \varphi \cdot \mathbbm{1}_{\{\varphi \leq M\}}, \quad R_M := \varphi \cdot \mathbbm{1}_{\{\varphi > M\}}.
\]
Then, by the triangle inequality and subadditivity of probability,
\[
\mathbb{P} \left( S_n \varphi > n \epsilon \right)
\leq \mathbb{P} \left( S_n \varphi_M > \frac{n\epsilon}{2} \right) + \mathbb{P} \left( S_n R_M > \frac{n\epsilon}{2} \right).
\]

\medskip
\noindent Since $\varphi_M$ is bounded by $M$ and belongs to $\mathcal{B}$, strong mixing implies sufficient regularity to apply Theorem \ref{bounded-ldt}:
\[
\mathbb{P} \left( S_n \varphi_M > \frac{n\epsilon}{2} \right)
\leq \exp\left( -\frac{n\epsilon^2}{8M^2} \right).
\]

\medskip
\noindent Using Burkholder's inequality for adapted sequences (see \cite{Burkholder}), we control the $L^2$-norm of the tail sum:
\[
\begin{aligned}
\| S_n R_M \|_2^2 
&\leq 4n C \left\| \varphi \cdot \mathbbm{1}_{\{\varphi > M\}} \right\|_2^2 \\
&\leq 4n C \, \|\varphi^2\|_2 \cdot \mu(\varphi > M)^{1/2} \\
&\leq 4n C \, \|\varphi^2\|_2 \cdot e^{-\alpha M / 2},
\end{aligned}
\]
where we used the exponential tail assumption in the last step. Applying Chebyshev's inequality:
\[
\mathbb{P} \left( S_n R_M > \frac{n\epsilon}{2} \right) 
\leq \frac{4\|S_n R_M\|_2^2}{n^2 \epsilon^2} 
\leq \frac{16 C \|\varphi^2\|_2}{n \epsilon^2} e^{-\alpha M / 2}.
\]

\medskip
\noindent Set
\[
M := \frac{\epsilon^{2/3} n^{1/3}}{\sqrt[3]{4}},
\]
so that both error terms decay at the same subexponential rate. Plugging this into the previous bounds gives:
\[
\mathbb{P} \left( S_n \varphi > n\epsilon \right) 
\leq \exp\left( -c_1 \epsilon^{2/3} n^{1/3} \right) + \frac{c_2}{n \epsilon^2} \exp\left( -c_3 \epsilon^{2/3} n^{1/3} \right),
\]
for some constants $c_1, c_2, c_3 > 0$ depending on $\epsilon$ and $\|\varphi^2\|_2$. For $n \geq n_0(\epsilon) := \left\lceil \frac{16 C \|\varphi^2\|_2}{\epsilon} \right\rceil$, the second term is dominated by the first, yielding the desired bound:
\[
\mathbb{P} \left( \frac{1}{n} S_n \varphi > \epsilon \right) \leq 2 \exp\left( -c \epsilon^{2/3} n^{1/3} \right).
\]
\end{proof}

\begin{remark}
Theorem~\ref{thm:unbounded-LDT} generalizes Theorem~3.2 of \cite{LV2001} to the setting of unbounded observables that can be expressed as the sum of a martingale difference sequence and a coboundary. This formulation encompasses a broader class of processes arising in dynamical and stochastic systems.
\end{remark}

\begin{remark}
    One interesting line of investigation is to understand what bound can we get by using self-normalization methods as in \cite{Pena99}. 
\end{remark}

\smallskip

\section{Applications}\label{applications}
\subsection{Large Deviations for the Lyapunov exponent of cocycles with mixed rank}

In \cite{DDGK2}, the authors proved (subexponential) large deviation estimates with decay rate $e^{-n^{\frac{1}{3}}}$ for Lebesgue almost every parameter in families of cocycles $t \to A_t$ with the positively winding property and fixed singular matrices. Their proof uses a careful truncation argument; an alternative proof can be obtained by applying Theorem \ref{thm:unbounded-LDT}, since in their work they verified that the observable of interest satisfies the conditions of the Theorem \ref{thm:unbounded-LDT} see Lemmas 3.4 and 3.7.

\subsection{Large Deviations for unbounded observable of expanding maps}

We begin by reviewing the concept of expanding map.

\begin{definition}[$C^{1+\alpha}$ Expanding Map]
    Let $M$ be a compact $C^\infty$ Riemannian manifold. A $C^{1+\alpha}$ map $T:M\to M$ is called \emph{expanding} if there exist constants $\lambda >1$ such that:
    \begin{center}
        \(\inf \|DT_x\| > \lambda\).
    \end{center}
\end{definition}

Our main application is the following.

\begin{theorem}\label{thm:LDT expanding}
Let \( (T, X, \mu) \) be a dynamical system with \( X = [0,1] \), where \( \mu \) is a \( T \)-invariant absolutely continuous invariant probability measure (a.c.i.p.) with density bounded above, i.e., \( \frac{d\mu}{d\text{Leb}} \leq C_\mu \). 

Consider the observable \( \varphi : [0,1] \to \mathbb{R} \), defined by
\[
\varphi(x) := |\log d(x, z)|,
\]
for some fixed \( z \in X \). 

Assume that \( \mathcal{L} \), the transfer operator with respect to \( \mu \), has exponential decay in the space of \(\alpha\)-Holder functions. That is, there exist constants \( \theta > 0 \) and \( C_{\mathcal{L}} > 0 \) such that
\[
\|\mathcal{L}^n f\|_{\alpha} \leq C_{\mathcal{L}} e^{-\theta n} \|f\|_{\alpha},
\quad \text{provided that } \int f \, d\mu = 0.
\]

Then for every \( \varepsilon > 0 \), there exist constants \( n_0(\epsilon) \in \N \) and \( c(\epsilon) > 0 \) such that for \(n \geq n_0\)
\[
\mu\left(
\left| S_n\left( \varphi - \int \varphi \, d\mu \right) \right| > n\varepsilon
\right)
\leq  \exp(-c \, \epsilon \, n^{1/3}),
\]
where \( S_n(\psi) := \sum_{k=0}^{n-1} \psi \circ T^k \) denotes the Birkhoff sum.
\end{theorem}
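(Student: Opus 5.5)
The plan is to truncate $\varphi$ and treat the two pieces separately. The bounded piece is handled by the martingale argument of Theorem~\ref{bounded-ldt}, driven by the transfer operator. The unbounded remainder is handled by a moment bound from the stationary Burkholder inequality, followed by Markov's inequality. To make the time-reversal explicit, note that for the a.c.i.p.\ $\mu$ the transfer operator $\mathcal{L}$ is the $L^2(\mu)$-adjoint of the Koopman operator $f\mapsto f\circ T$. Hence $\mathbb{E}_\mu[f\circ T^k\mid T^{-k-j}\mathcal{F}] $-type conditional expectations are expressed through $\mathcal{L}^j$. Reversing time turns the Birkhoff sum $S_n$ into a reverse martingale plus a coboundary, and $\mathcal{L}$ plays the role of $\Qop$ in Section~\ref{martingale}. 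The distribution of $S_n f$ is unchanged by the reversal, so Theorem~\ref{bounded-ldt} and the Corollary to the Peligrad et al.\ inequality apply with $\Qop$ replaced by $\mathcal{L}$.

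\textbf{Step 1 (smooth truncation).} A sharp cutoff $\varphi\mathbbm{1}_{\{\varphi\le M\}}$ is not H\"older, so I use a Lipschitz truncation instead: $\varphi_M:=\min(\varphi,M)$. This function is Lipschitz with constant about $e^{M}$, since $|\log d(x,z)|$ has derivative $1/d(x,z)$ and $\varphi\le M$ forces $d(x,z)\ge e^{-M}$. It follows that $\|\varphi_M\|_\alpha\lesssim M+e^{\alpha M}$. Set $f_M=\varphi_M-\int\varphi_M\,d\mu$ and $\psi_M=\sum_{j\ge0}\mathcal{L}^j f_M$. Exponential decay gives $\|\psi_M\|_\infty\le C_{\mathcal L}(1-e^{-\theta})^{-1}\|f_M\|_\alpha\lesssim e^{\alpha M}$. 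Theorem~\ref{bounded-ldt} then bounds $\mu(|S_nf_M|>n\varepsilon/2)$ by $2\exp(-c\varepsilon^2 n e^{-2\alpha M})$.

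\textbf{Step 2 (tail).} Let $R_M=\varphi-\varphi_M=(\varphi-M)\mathbbm{1}_{\{\varphi>M\}}$, centered as $g_M=R_M-\int R_M$. Since the density is bounded, $\mu(\varphi>t)\le 2C_\mu e^{-t}$, and consequently $\|g_M\|_p\lesssim p\,e^{-M/p}$. I then apply the Corollary of Burkholder type with $\mathcal L$ in place of $\Qop$. The correction terms $\|\mathcal{L}^k g_M\|_2$ are controlled by duality: for $\|h\|_\alpha\le1$, $\int \mathcal L^k g_M\,h\,d\mu=\int g_M\,(h-\int h)\circ T^k d\mu$. I also use the trivial bound $\|\mathcal{L}^kg_M\|_2\le\|g_M\|_2$ for small $k$. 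Together these should give $\|S_ng_M\|_p\lesssim C_p\sqrt n\,(p+\log n)\,e^{-M/p}$. Markov's inequality then yields
\[
\mu(|S_ng_M|>n\varepsilon/2)\le \Bigl(\tfrac{2C_p\sqrt n\,(p+\log n)}{n\varepsilon}\Bigr)^p e^{-M}.
\]

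\textbf{Step 3 (optimisation).} I choose $e^{2\alpha M}\approx n^{2/3}$, so that the first term is $\exp(-c\varepsilon^2n^{1/3})$ and $M\approx\frac{1}{3\alpha}\log n$. This choice alone makes the tail bound only polynomial, so I will also let $p$ grow with $n$, for instance $p\sim n^{1/3}$. The aim is for $(C_p p/(\varepsilon\sqrt n))^p$ to give the stretched-exponential factor $\exp(-c\varepsilon n^{1/3})$.

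\textbf{Main obstacle.} This last balancing step is where I expect the real difficulty. The Burkholder constant $C_p$ grows with $p$, and with a generic $C_p\le p^p$ the choice $p\sim n^{1/3}$ fails. Success therefore depends on having a constant of order $\sqrt p$ (or $p$) in the Peligrad inequality. If that is not available, my fallback is to truncate at a polynomial level $M\sim\varepsilon^{2/3}n^{1/3}$, as in Theorem~\ref{thm:unbounded-LDT}. To keep $\|\varphi_M\|_\alpha$ from exploding in that case, I would replace the H\"older norm by a BV-type space, in which $\|\min(\varphi,M)\|_{BV}\lesssim M$. I would then assume or verify exponential mixing there, which holds for expanding interval maps, so that the tail term $e^{-M}$ is already $\exp(-c\varepsilon n^{1/3})$ and Chebyshev with $p=2$ suffices.
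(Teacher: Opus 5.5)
Neither of your two routes proves the theorem under its stated hypothesis.

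\textbf{Main route.} The decisive estimate of Step~3 is left open, and as set up it cannot close. With $M\approx\frac{1}{3\alpha}\log n$ the truncation contributes only the polynomial factor $e^{-M}=n^{-1/(3\alpha)}$, since $e^{-M/p}\to 1$ once $p\gg\log n$. Everything therefore rests on a moment bound $\|S_ng_M\|_p\lesssim C_p\sqrt n\,p$ at $p\asymp n^{1/3}$.
\begin{itemize}
\item If $C_p\asymp p$, the Markov bound $\bigl(C_p\,p/(\varepsilon\sqrt n)\bigr)^p$ is optimised at $p\asymp(\varepsilon\sqrt n)^{1/2}$. It then gives only $\exp(-c\sqrt{\varepsilon}\,n^{1/4})$, so your ``(or $p$)'' is too optimistic.
\item Only $C_p\asymp\sqrt p$ reaches $\exp(-c\varepsilon^{2/3}n^{1/3})$. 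Nothing quoted in the paper, which only remarks $C_p\le p^p$, gives that for the Peligrad--Utev--Wu inequality.
\item For $p>2$ the correction terms in that inequality are $\|\mathcal L^kg_M\|_p$, not the $L^2$ norms of the Corollary. Your duality identity bounds neither: it only pairs $\mathcal L^kg_M$ against H\"older test functions.
\item Even that pairing decays only after you approximate the unbounded, non-H\"older $g_M$ by H\"older functions.
\end{itemize}

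\textbf{Fallback.} It replaces the hypothesis by exponential mixing on $BV$. That is not assumed, and $T$ is not even assumed piecewise monotone or expanding, so the fallback proves a different theorem.

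\textbf{Comparison with the paper.} The paper's proof is exactly your fallback carried out inside $C^\alpha$. It feeds $\varphi$ into Theorem~\ref{thm:unbounded-LDT}, which truncates at $M\asymp\varepsilon^{2/3}n^{1/3}$. It asserts $\varphi\mathbbm{1}_{\{\varphi\le M\}}\in C^\alpha$, bounds the bounded part by $\exp(-n\varepsilon^2/(8M^2))$ via Theorem~\ref{bounded-ldt}, and treats the tail by Chebyshev with $p=2$.

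Your time reversal is correct: the reversed orbit is a stationary Markov chain with operator $\mathcal L$, which is what the paper's appeal to the induced Markov system needs. Your Step~1 computation also shows why the paper's bounded-part estimate is unjustified. The sharp cutoff jumps by $M$. Any function equal to $\varphi$ on $\{\varphi\le M\}$ has H\"older seminorm at least $e^{\alpha M}\log 2$; take points at distances $e^{-M}$ and $2e^{-M}$ from $z$ on the same side. So $C^\alpha$-mixing controls $\|\psi_M\|_\infty$ only by $e^{\alpha M}$, not $O(M)$, and the paper does not supply the ingredient you are missing.

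\textbf{What the stated hypothesis does give by this method.}
\begin{itemize}
\item Use $\|\mathcal L^jf_M\|_\infty\le\|f_M\|_\infty\le M$ for $j<\alpha M/\theta$ and H\"older decay afterwards. This gives $\|\psi_M\|_\infty\lesssim M^2$, hence $\exp(-cn\varepsilon^2/M^4)$ from Theorem~\ref{bounded-ldt}.
\item The tail needs no moment bound at all. Since $R_M\ge 0$ and $\int R_M<\varepsilon/2$ for large $M$, the event $\{|S_n(R_M-\int R_M)|>n\varepsilon/2\}$ lies inside $\{S_nR_M>0\}$. That set has measure at most $n\,\mu(\varphi>M)\le 2C_\mu n e^{-M}$.
\item Balancing at $M=(\varepsilon^2 n)^{1/5}$ gives $\exp(-c(\varepsilon^2n)^{1/5})$. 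This is the $n^{1/5}$ rate the paper's final remark attributes to earlier work.
\end{itemize}
In this scheme the exponent $1/3$ needs $\|\psi_M\|_\infty=O(M)$. One way to get it is exponential mixing on a space where $\|\min(\varphi,M)\|=O(M)$, such as $BV$ for piecewise expanding maps. That has to be added as a hypothesis. With it, your fallback goes through, using the union-bound tail in place of Chebyshev.
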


\begin{proof}
The strategy is to check that the observable \(\varphi(x)=|\log d(x,z)|\) satisfies the three hypotheses of Theorem \ref{thm:unbounded-LDT} when the Banach space \(\mathcal B\) is taken to be the Holder space \(C^\alpha([0,1])\) (with the norm \(\|\cdot\|_\alpha\) that controls sup and Holder seminorm) and \(Q=\mathcal L\) is the transfer operator acting on functions.

\medskip

\noindent\textbf{(1) Regularity after truncation.}
Fix \(M>0\) and consider the truncated observable
\[
\varphi_M(x):=\varphi(x)\cdot \mathbf{1}_{\{\varphi\le M\}}(x)= |\log d(x,z)|\cdot \mathbf{1}_{\{d(x,z)\ge e^{-M}\}}(x).
\]
On the compact set \( \{x\in[0,1]:d(x,z)\ge e^{-M}\} \) the function \(|\log d(x,z)|\) is bounded and (away from \(z\)) is Lipschitz/Holder: indeed the singularity at \(z\) is removed by the indicator. Therefore \(\varphi_M\) is a bounded Holder function, i.e.\ \(\varphi_M\in C^\alpha([0,1])\). This verifies the first hypothesis.

\medskip

\noindent\textbf{(2) Exponential tails.}
For \(t>0\),
\[
\{\varphi>t\} = \{|\log d(x,z)|>t\} = \{ d(x,z) < e^{-t} \}.
\]
This is an interval (or union of at most two intervals near the boundary) whose Lebesgue length is at most \(2e^{-t}\). Since \(d\mu/d\operatorname{Leb}\le C_\mu\), we obtain
\[
\mu(\varphi>t) \le C_\mu \cdot \operatorname{Leb}(\{d(x,z)<e^{-t}\}) \le 2 C_\mu e^{-t}.
\]
Thus the exponential-tail hypothesis of Theorem \ref{thm:unbounded-LDT} holds with \(C_1=2C_\mu\) and \(\alpha=1\).

\medskip

\noindent\textbf{(3) \(L^2\)-control of tails (centred).}
Let us define the tail function (for a truncation level \(M>0\))
\[
\tau_M(x) := \varphi(x)\mathbf{1}_{\{\varphi>M\}}(x) = |\log d(x,z)|\mathbf{1}_{\{d(x,z)<e^{-M}\}}(x).
\]
We will apply the spectral gap hypothesis to the centered tail
\[
\tilde\tau_M := \tau_M - \mu(\tau_M),
\]
which has zero mean. Observe first that \(\tau_M\in L^2(\mu)\) and its \(L^2\)-norm is exponentially small in \(M\). Indeed, using the standard tail integration formula and the tail bound established in (2),
\begin{align*}
\|\tau_M\|_2^2 \;=\; \int_{\{\varphi>M\}} \varphi^2\,d\mu
&= \int_M^\infty 2t\,\mu(\varphi>t)\,dt
\le \int_M^\infty 2t\cdot (2C_\mu e^{-t})\,dt \\
&= 4C_\mu \int_M^\infty t e^{-t}\,dt
= 4C_\mu (M+1)e^{-M}.
\end{align*}
Hence
\[
\|\tau_M\|_2 \le C' (M+1)^{1/2} e^{-M/2}
\]
for some explicit constant \(C'\) depending only on \(C_\mu\). In particular \(\|\tau_M\|_2\to 0\) exponentially fast as \(M\to\infty\).

Next, we just proceed by using the fact that the tranfers operator does not grown norms in $L^2$ and use the exponential tails of $\varphi$.
Therefore the required \(L^2\)-control of the centred tails in hypothesis (3) of Theorem \ref{thm:unbounded-LDT} is satisfied (the bound is uniform in \(n\) and depends on the truncation level \(M\); this is exactly the condition needed for the application of the general theorem).

\medskip

\noindent\textbf{Application of Theorem \ref{thm:unbounded-LDT}.}
Having verified the regularity of truncations, the exponential tail, and the $L^2$-control of the (centred) tails, we may apply Theorem \ref{thm:unbounded-LDT} to the sequence of random variables generated by the dynamical system \((T,X,\mu)\) (equivalently to the Markov system induced by the transfer operator \(\mathcal L\) on the Holder space). Theorem \ref{thm:unbounded-LDT} gives, for every \(\epsilon>0\), constants \(c_0(\epsilon)>0\) and \(n_0(\epsilon)\) such that for all \(n\ge n_0(\epsilon)\),
\[
\mu\Big( \frac{1}{n} S_n\big(\varphi - \mu(\varphi)\big) > \epsilon \Big)
\le 2\exp\big(-c_0 \,\epsilon^{2/3} n^{1/3}\big).
\]
Applying the same bound to the observable \(-\varphi\) (or, equivalently, to the symmetric deviation), we obtain the same upper bound for the negative tail; combining both tails by a union bound yields
\[
\mu\Big( \big| S_n(\varphi-\mu(\varphi))\big| > n\epsilon \Big)
\le 4\exp\big(-c_0 \,\epsilon^{2/3} n^{1/3}\big).
\]

Finally, note that for \(\epsilon\in(0,1]\) we have \(\epsilon^{2/3}\ge \epsilon\) (indeed \(\epsilon^{2/3}/\epsilon=\epsilon^{-1/3}\ge1\)), and thus
\[
4\exp\big(-c_0 \,\epsilon^{2/3} n^{1/3}\big) \le 4\exp\big(-c_0 \,\epsilon n^{1/3}\big).
\]
Renaming constants (absorbing the prefactor \(4\) into the exponential by decreasing the constant in the exponent if desired) we arrive at the desired form: there exist constants \(n_0(\epsilon)\) and \(c(\epsilon)>0\) such that for all \(n\ge n_0(\epsilon)\),
\[
\mu\Big( \big| S_n(\varphi-\mu(\varphi))\big| > n\epsilon \Big)
\le \exp\big(-c(\epsilon)\, n^{1/3}\big).
\]
This completes the proof of Theorem \ref{thm:LDT expanding}.
\end{proof}

\begin{corollary}[Tent Map Application]
            For the tent map $T:[0,1]\to[0,1]$ with $\varphi(x) = \log|x| - \int \log|x|d\mu$ and $\mu$ Lebesgue measure, taking $\alpha=1$, $C_1=1$, $C_2=4$, we obtain for $n \geq \lceil 512/\epsilon^3 \rceil$:
            \[
            \mathbb{P}_\mu\left(\left|\frac{1}{n}\sum_{k=0}^{n-1}\log|T^k x|\right| > 1+\epsilon\right) \leq 4\exp\left(-\frac{\epsilon^{4/3}n^{1/3}}{24}\right)
            \]
            \end{corollary}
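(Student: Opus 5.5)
The tent map corollary should follow by specializing Theorem~\ref{thm:LDT expanding}, or more precisely the quantitative Theorem~\ref{thm:unbounded-LDT}, to $T(x)=1-|2x-1|$ with $\mu=\mathrm{Leb}$ and tracking the constants.

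\textbf{Step 1: setup.} Lebesgue measure is $T$-invariant and $d\mu/d\mathrm{Leb}=1$, so $C_\mu=1$. The transfer operator $\mathcal L f(x)=\tfrac12[f(x/2)+f(1-x/2)]$ contracts the Lipschitz seminorm by a factor $1/2$. It therefore has exponential decay on mean-zero Lipschitz functions, which is the hypothesis of Theorem~\ref{thm:LDT expanding} with $\alpha=1$.

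\textbf{Step 2: the three hypotheses of Theorem~\ref{thm:unbounded-LDT}.} The relevant observable is $\tilde\varphi(x)=-\log x\ge 0$, with $z=0$.
\begin{enumerate}
\item Truncations are handled exactly as in part~(1) of the proof of Theorem~\ref{thm:LDT expanding}.
\item Because the singularity is at the endpoint $0$, the set $\{\tilde\varphi>t\}=[0,e^{-t})$ is a single interval. Hence $\mu(\tilde\varphi>t)=e^{-t}$ exactly, which gives $C_1=1$ and $\alpha=1$.
\item For the $L^2$ control, the mean-zero part of $\tau_M=\tilde\varphi\mathbf 1_{\{\tilde\varphi>M\}}$ has the explicit form $\mathcal L^k\tau_M(x)=2^{-k}\sum_{j}\tau_M(y_j)$ over the $2^k$ preimages $y_j$. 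Of these, only the preimage nearest $0$, of size about $2^{-k}x$, can be in the tail region. So $\|\mathcal L^k\tau_M\|_2$ should be of order $2^{-k}(k+\|\tau_M\|_2)$. Summing against $k^{-1/2}$ gives a geometric series, and I aim to bound it by the stated $C_2=4$.
\end{enumerate}

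\textbf{Step 3: run the proof of Theorem~\ref{thm:unbounded-LDT} with explicit constants.} Use $\|\varphi^2\|_2=(\int\log^4 x\,dx)^{1/2}=\sqrt{24}$, and apply the same split into truncation plus tail as in that proof. The statement is phrased with threshold $1+\epsilon$ and no centering. I would read this as: $\int\log|x|\,dx=-1$, so $|\tfrac1n S_n\log|x||>1+\epsilon$ forces $|\tfrac1n S_n\varphi|>\epsilon$ for the centred $\varphi$. One then applies the two-sided bound, which is the union of the two one-sided bounds and accounts for the prefactor $4$.

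\textbf{Step 4: constants.} Choose the truncation level $M$ proportional to $\epsilon^{2/3}n^{1/3}$ as in the theorem. The exponent $\epsilon^{4/3}/24$ must then come from balancing $n\epsilon^2/(8M^2)$ against $\alpha M/2$. The threshold $n\ge 512/\epsilon^3$ should be the condition that makes the Chebyshev term $\frac{16C\|\varphi^2\|_2}{n\epsilon^2}$ at most $1$.

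\textbf{Main obstacle.} The hard part is making these constants actually match, in particular the powers of $\epsilon$. The theorem's exponent is $\epsilon^{2/3}$, while the corollary has $\epsilon^{4/3}$, and the stated $n_0$ scales as $\epsilon^{-3}$ rather than $\epsilon^{-1}$. I expect to redo the balancing explicitly, taking $M=\epsilon^{2/3}n^{1/3}/c$ for some constant $c$, and to choose $c$ and the thresholds so that both error terms are bounded by $2\exp(-\epsilon^{4/3}n^{1/3}/24)$. If the exact numbers do not come out, I would settle for the stated form up to absolute constants.
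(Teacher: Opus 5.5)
The paper gives no separate proof of this corollary; it is meant as a specialization of Theorem~\ref{thm:unbounded-LDT}, which is what you propose. However, the step you flag as the ``main obstacle'' is a genuine gap. It cannot be closed by bookkeeping, because the inequality is false as stated. Since $\log T^kx\le 0$, the event is $\{\frac1n\sum_{k<n}(-\log T^kx)>1+\epsilon\}$. It contains $\{x<e^{-n(1+\epsilon)}\}$, of Lebesgue measure $e^{-n(1+\epsilon)}$, because the $k=0$ term alone exceeds $n(1+\epsilon)$ and the other terms are nonnegative. For $\epsilon\ge 8$ one has $\lceil 512/\epsilon^3\rceil=1$. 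Take $n=1$ and $\epsilon=8\cdot 24^3$, so that $\epsilon^{1/3}=48$ and $\epsilon^{4/3}/24=2\epsilon$; the claim would then give $e^{-1-\epsilon}\le 4e^{-2\epsilon}$, which is false. An exponent superlinear in $\epsilon$ is incompatible with the exponential tail of a single term.

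Your Step~4 heuristic could not have produced $\epsilon^{4/3}$ anyway. Balancing $n\epsilon^2/(8M^2)=M/2$ gives $M=(n\epsilon^2/4)^{1/3}$ and exponent $2^{-5/3}\epsilon^{2/3}n^{1/3}$, and rescaling $M$ by a constant never changes the power of $\epsilon$. The missing ingredient is the restriction $\epsilon\in(0,1]$, the same one used at the end of the proof of Theorem~\ref{thm:LDT expanding}. With it, $2^{-5/3}\epsilon^{2/3}\ge\epsilon^{4/3}/24$, so $\epsilon^{4/3}$ is merely a weakening. Likewise, $512/\epsilon^3$ is not the condition that the Chebyshev term be at most $1$. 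That condition is $n\ge 16C\|\varphi^2\|_2/\epsilon^2$. For $\epsilon\le 1$ it follows from $n\ge 512/\epsilon^3$ only if you have an explicit Burkholder constant with $16C\|\varphi^2\|_2\le 512$ (that is, $C\le 32/\sqrt{24}$ with your value of $\|\varphi^2\|_2$), which neither you nor the theorem supply. Falling back on ``the stated form up to absolute constants'' does not prove a statement whose content is its constants.

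Step~2 also contains errors, and they matter once you try to rerun the proof with explicit constants.
In (1), the function $-\log x\,\mathbf{1}_{\{x\ge e^{-M}\}}$ jumps from $M$ to $0$ at $x=e^{-M}$, so it is not Lipschitz. For the tent map you should work in BV, where $\operatorname{Var}(\mathcal{L}f)\le\frac12\operatorname{Var}(f)$, and the Azuma constant must then include $2\|\psi\|_\infty$, not just $M$.

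In (3), take the uncentred tail $\tau_M$. Positivity of $\mathcal{L}$ and invariance of Lebesgue measure give $\|\mathcal{L}^k\tau_M\|_2\ge\int\tau_M=(M+1)e^{-M}$ for every $k$. Hence $\sum_{k\le n}k^{-1/2}\|\mathcal{L}^k\tau_M\|_2\ge\sqrt n\,(M+1)e^{-M}$ is unbounded in $n$, so hypothesis (3) of Theorem~\ref{thm:unbounded-LDT}, read literally, fails; decay is only available after centring. Even your rate is off. For $2^k\le e^M$ only the preimage $x/2^k$ lies in $[0,e^{-M})$, so $\mathcal{L}^k\tau_M(x)=2^{-k}(k\log 2-\log x)\mathbf{1}_{\{x<2^ke^{-M}\}}$ and $\|\mathcal{L}^k\tau_M\|_2=2^{-k/2}\|\tau_M\|_2$ exactly.

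Finally, the Peligrad--Burkholder bound carries the term $240\sum_k k^{-1/2}\|\mathcal{L}^k\tilde\tau_M\|_2$, which the display in the proof of Theorem~\ref{thm:unbounded-LDT} does not account for. An $M$-independent bound such as $C_2=4$ on that term yields only $\mathbb{P}(S_nR_M>n\epsilon/2)=O(1/(n\epsilon^2))$, with no factor $e^{-M/2}$. What you actually need is a bound on this centred sum that is exponentially small in $M$. The formula above gives order $(M+1)e^{-M/2}$ from the range $2^k\le e^M$; the range $2^k>e^M$ needs a separate argument. That estimate is absent from your proposal.
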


\begin{remark}
As remarked in \cite{NT2020} the observable above does not have large deviations with a exponential rate function, if the point \(z \in X\) is a periodic point \cite{NT2020}  obtained a optimal stretched exponential decay with rate $\exp{(-n^{1/2})}$. If \(z\) is not a periodic point Theorem \ref{thm:LDT expanding} improved previous estimates of \cite{NT2020} which obtained a decay rate of order \(\exp(-n^{1/5})\).
\end{remark}

\smallskip

\subsection*{Acknowledgments}
This work is part of the author's Phd thesis at the Pontifical Catholic
University of Rio de Janeiro. The author is grateful to his advisor, Silvius Klein, for his
guidance and support. The author was supported by CAPES  Finance Code 001.

\bigskip

\bibliographystyle{amsplain}
\bibliography{bib}

\end{document}